\documentclass[options]{amsart}
\usepackage{latexsym,ifthen,amssymb}
\usepackage{amsmath}
\usepackage{hyperref}
\usepackage[toc,page,title,titletoc,header]{appendix}
\usepackage{enumerate}
\usepackage{graphicx}
\usepackage{bm}
\usepackage{subfigure}
\usepackage{float}

\usepackage{tikz-cd}
\usepackage{caption}

\usepackage{multirow}
\usepackage{mathrsfs}

\usepackage{hyperref}

\usepackage{cleveref}
\newtheorem{theorem}{Theorem}[section]
\newtheorem{lemma}[theorem]{Lemma}

\newtheorem{claim}[theorem]{Claim}
\newtheorem{corollary}[theorem]{Corollary}

\theoremstyle{definition}
\newtheorem{definition}[theorem]{Definition}

\theoremstyle{remark}

\numberwithin{equation}{section}

\theoremstyle{noparens}
\newtheorem*{question*}{Question}
\newtheorem*{theorem*}{Theorem}

\def\mcal{\mathcal}
\def\h{\hat}
\def\t{\tilde}

\def\C{C}

\def\m{m}
\def\capacity{capacity}
\def\uhr{\upharpoonright}

\def\alice{Alice}
\def\baby{Baby}

 \title{Martin-L\"of randomness and rK-reducibility}

 \author{George Barmpalias    }
 \address{State Key Lab of Computer Science, Institute of Software, Chinese Academy of Sciences, Beijing, China. }
\email{barmpalias@gmail.com }

  \author{Lu Liu }
\address{School of Mathematics and Statistics,
Central South University,
City Changsha, Hunan Province,
China. 410083}
\thanks{Lu Liu is supported by  NSFC grant 12471002.}
\email{g.jiayi.liu@gmail.com}
\subjclass[2010]{Primary  03D80; Secondary 68Q30 03D32}
\keywords{Computability theory, Algorithmic randomness theory,  $rK$-reducibility}

\begin{document}

\maketitle
\begin{abstract}
Can random real carry 
arbitrary information. In terms 
of (Turing) computability, Ku\v{c}era and G\'{a}cs
addressed this question in affirmative. i.e., 
Every real $x\in 2^\omega$ is computable from some Martin-L\"of random real.
This result  
motivated quantitative versions of coding into random reals.
i.e., How many bits of the random oracle are needed to
compute the first $n$ bits of $x$.
Barmpalias and   Lewis-Pye obtained a coding that uses
$n+O(\log n)$ many bits.
Is it possible for the coding to use $n$ bits exactly?
This is equivalent to say, if every real is $rK$-reducible to 
a Martin-L\"of random real.
In this paper, we prove that such a coding does not exist.
i.e., There exists some real that is not $rK$-reducible to 
any Martin-L\"of random real.

\end{abstract}

 %\section{Introduction}

\section{Introduction}

One of the basic questions in algorithmic randomness theory is how much
information a random real can contain. There are two different aspects
to this question. We may ask what can be computed from a random real.
We may also ask how much of the random real must be known in order to
describe the information that it contains. The distinction matters:
an oracle computation may read much farther into the oracle than the
part of its output that we want to produce.

The first question has a remarkable answer. Ku\v{c}era and G\'{a}cs
proved that every real is computable from some Martin-L\"of random real
\cite{Kucera1985,Gacs1986}. Thus randomness does not prevent a real
from carrying arbitrary noncomputable information. This result also
motivated quantitative versions of coding into random reals. In these
versions one asks how many bits of the random oracle are needed to
compute a given number of bits of the real being coded. The amount of
extra oracle use, usually called redundancy, has been studied from
several directions. Doty related asymptotic compression into random
oracles to effective dimension \cite{Doty2006}. Later work of Barmpalias and Lewis-Pye obtained
optimal logarithmic redundancy: in particular, there are coding methods
that use only $n+O(\log n)$ bits of a random oracle to compute the first
$n$ bits of an arbitrary real
\cite{BarmpaliasLewisPye2018Optimal,BarmpaliasLewisPye2018}.
None of these questions, however, asks
whether an initial segment of the random oracle already gives a
bounded-length description of the corresponding initial segment of
the coded real. This is the question considered here.

There is another route to this question, starting from relative
randomness rather than oracle computation. Martin-L\"of randomness
was defined by the avoidance of effective tests of small measure
\cite{MartinLof1966}.
It can also be recognized from the prefix-free Kolmogorov complexity
of initial segments: a real is 1-random precisely when its initial
segments cannot be compressed by more than a fixed number of bits
\cite{Schnorr1971,Chaitin1975}.
The complexity viewpoint allows us to compare two reals even when
neither of them is computable or random. We can ask whether the
initial segments of one real are uniformly no more complex than those
of another. This gives the ordering usually called $K$-reducibility
\cite{DowneyHirschfeldtLaForte2004,MillerYu2008}. It compares two
complexity profiles, but it does not say that one initial segment can
be described \emph{from} the other.

In the class of left-c.e. reals, there is a related and older way to
compare randomness. Solovay reducibility compares the rates at which
computable increasing approximations converge. The halting
probabilities of universal prefix-free machines are left-c.e. and
1-random, and their Solovay degrees are greatest among the left-c.e.
reals \cite{Chaitin1975,CaludeEtAl2001,KuceraSlaman2001}.
The Solovay degrees of left-c.e. reals are also dense below their
greatest degree \cite{DowneyHirschfeldtNies2002}.
Solovay reducibility preserves inequalities between initial-segment
complexities. On the other hand, its definition is
tied to increasing approximations. To compare arbitrary reals by the
information in their initial segments, Downey, Hirschfeldt and
LaForte introduced $rK$-reducibility
\cite{DowneyHirschfeldtLaForte2004}. It uses conditional complexity:
instead of merely comparing the lengths of shortest descriptions,
it asks for a short description of one initial segment when the
corresponding initial segment of the other real is given.

We now state these comparisons precisely. We regard elements of
$2^\omega$ as reals, and write $x\uhr n$ for the first $n$ bits of a
real $x$. Let $K(\sigma)$ be the prefix-free complexity of a finite
string $\sigma$, and let $K(\sigma\mid\tau)$ be its conditional
prefix-free complexity given $\tau$. The Levin--Schnorr
characterization of 1-randomness says that
\[
  y\text{ is 1-random}
  \quad\Longleftrightarrow\quad
  K(y\uhr n)\geq n-O(1)\text{ for every }n.
\]
For reals $x,y$, we write $x\leq_K y$ if
\[
  K(x\uhr n)\leq K(y\uhr n)+O(1)
  \quad\text{for every }n.
\]
We say that $x$ is \emph{$rK$-reducible} to $y$, and write
$x\leq_{rK}y$, if
\[
  K\bigg(x\uhr n~\bigg|~ y\uhr n\bigg)\leq O(1)
  \quad\text{for every }n.
\]
In each case the constant may depend on $x$ and $y$, but not on $n$.
This uniformity across all lengths is essential.

The original definition of $rK$-reducibility has a useful coding
interpretation. Given $y\uhr n$, an effective procedure is allowed
to present a bounded list of candidates for $x\uhr n$; the correct
candidate must be on the list, but it need not be singled out
effectively. The bound on the size of the list is independent of $n$.
Equivalently, one of a bounded collection of short descriptions
produces $x\uhr n$ from $y\uhr n$. The choice of a description may
change with $n$. Hence $rK$-reducibility is less uniform than a
single procedure that always computes $x\uhr n$ from $y\uhr n$,
but it still makes a strong demand on the two initial segments
\cite{DowneyHirschfeldtLaForte2004}.

There are two useful comparisons. Downey, Hirschfeldt and LaForte
proved that $x\leq_{rK}y$ implies $x\leq_T y$. Also, a bounded
conditional description gives
\[
  x\leq_{rK}y\quad\Longrightarrow\quad x\leq_K y.
\]
The converse of the second implication is false: knowing only that
the initial segments of $y$ have at least as much complexity as
those of $x$ does not supply short descriptions of $x\uhr n$ from
$y\uhr n$. For example, there are noncomputable $K$-trivial reals
$x$, for which $K(x\uhr n)\leq K(n)+O(1)$ for every $n$.
If $y$ is computable, then $K(y\uhr n)=K(n)+O(1)$, so $x\leq_K y$.
But $x\not\leq_{rK}y$, since $rK$-reducibility implies Turing
reducibility \cite{DowneyHirschfeldtLaForte2004}.
The $K$-trivial reals are also characterized by lowness for
Martin-L\"of randomness \cite{Nies2005}, which shows that this example
lies within another central theme of algorithmic randomness.
These comparisons
place $rK$-reducibility between two different ways of measuring
information: computation from an oracle and comparison of
complexity profiles.

The structure of the $K$-degrees has its own difficulties. Miller
and Yu showed that the two halves of a 1-random join need not have
an upper bound in the $K$-degrees \cite{MillerYu2008}. They also
studied the oscillations of the initial-segment complexity of random
reals and obtained further results about random $K$-degrees
\cite{MillerYu2011}. For 2-random reals, the upper cones in the
$K$-degrees are countable \cite{Miller2009}. Recent work of Liu \cite{Liu2026} shows that
there is no maximal $K$-degree. These are results
about the numerical complexity ordering. Our question concerns the
stronger relation obtained by putting one initial segment in the
condition of a description of the other.

The results for left-c.e. reals might suggest that random reals
are rich enough to sit above all reals in the $rK$ ordering. Indeed,
Downey, Hirschfeldt and LaForte proved that a left-c.e. real is
$rK$-complete among left-c.e. reals exactly when it is 1-random
\cite{DowneyHirschfeldtLaForte2004}. Together with the
Ku\v{c}era--G\'{a}cs theorem, this raises a natural question: can
\emph{every} real be placed below \emph{some} 1-random real in the
$rK$ ordering? The quantifiers are important. We may choose the
random real separately for each real, just as in the
Ku\v{c}era--G\'{a}cs theorem. The question is whether this freedom
is enough to obtain bounded conditional descriptions at every length.

Our result gives a negative answer.

% For reals $x,y\in 2^\omega$, we say 
% $x$ is \emph{$rK$-reducible} to $y$ iff 
% $$K\bigg(x\uhr m~\bigg|~y\uhr m\bigg)  \leq O(1).$$
%Where the $O(1)$ constant depends on $x,y $ but not $m$.

\begin{corollary}\label{optth0}
There is a  real $x\in 2^\omega$
such that: for every 1-random real $y\in 2^\omega$,
we have $x$ is not rK-reducible to $y$.
That is:
$$\sup_m K\bigg(x\uhr m~\bigg|~y\uhr m\bigg) = \infty.$$
\end{corollary}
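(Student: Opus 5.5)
The plan is to build $x$ so that, for each constant $c$, the class
\[
P_c(x)=\{y : \forall n\ K(x\uhr n\mid y\uhr n)\le c\}
\]
is contained in a Martin-L\"of null set. Then no 1-random $y$ lies in $\bigcup_c P_c(x)$, which is exactly the statement of Corollary~\ref{optth0}. The basic counting fact is the following. For each $\sigma$ of length $n$, at most $2^{c+1}$ strings $\tau$ satisfy $K(\tau\mid\sigma)\le c$. Hence
\[
\sum_{|\tau|=n}\#\{\sigma\in 2^n : K(\tau\mid\sigma)\le c\}\le 2^{c+1}\,2^n .
\]
Now fix a string $\rho$ of length $m$ and look at its $2^{n-m}$ extensions $\tau$ of length $n$. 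On average, the set $D_c(\tau)=\{\sigma\in 2^n: K(\tau\mid \sigma)\le c\}$ has measure at most $2^{c+1-(n-m)}$. So if we pass from length $m$ to length $n\gg m$ and pick the extension $\tau$ well, the oracles $y$ with $y\uhr n\in D_c(\tau)$ form a set of measure at most $2^{c+1-(n-m)}$.

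Fix a fast-growing sequence of lengths $n_0<n_1<\cdots$ with gaps $n_{k+1}-n_k\ge 2k+10$, say. The real $x$ is defined blockwise: $x\uhr n_{k+1}$ is the extension of $x\uhr n_k$ minimizing $\mu(D_{k}(x\uhr n_{k+1}))$, with ties broken lexicographically. The sets $D_c(\tau)$ are only c.e., because $K(\cdot\mid\cdot)$ is approximable from above. So the choice is made with a computable approximation $x_s$. At stage $s$ we pick the extension minimizing the stage-$s$ measure $\mu(D_{k,s}(\cdot))$, and we switch only when the current choice's measure exceeds twice that of some other candidate. This makes $x$ $\Delta^0_2$. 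Since $D_{k}(\tau)$ grows monotonically and the total mass over all extensions is bounded by the counting estimate, the number of switches at block $k$ is bounded computably in $k$, roughly by $O(n_{k+1}-n_k)$ doublings per candidate. So $x$ is in fact $\omega$-c.e. with respect to these block choices.

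For the test, let $U_k$ be the c.e. open set generated, over all stages $s$, by $\{[\sigma] : \sigma\in D_{k,s}(x_s\uhr n_{k+1})\}$. This includes every version of block $k$ that was ever current. Each such version has final measure at most about $2\cdot 2^{k+1-(n_{k+1}-n_k)}$. Multiplying by the bounded number of versions and choosing the gaps large enough gives $\mu(U_k)\le 2^{-k}$. If $y\in P_c(x)$, then for every $k\ge c$ we have $y\uhr n_{k+1}\in D_c(x\uhr n_{k+1})\subseteq D_k(x\uhr n_{k+1})$, and therefore $y\in U_k$. Hence $y$ lies in infinitely many members of the test $(U_k)$. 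This is a Solovay test, since $\sum_k\mu(U_k)<\infty$, so $y$ is not 1-random.

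The main obstacle is the mind-change accounting. The choice of each block uses information that is only c.e. from above. If the chosen extension has to change, its enumerated descriptions remain in $U_k$, so the cost must be charged to a bounded number of changes. The first thing I would try is the doubling-threshold rule just described, bounding the number of changes by a potential-function argument. Each switch doubles the measure of a fixed candidate's describing set, and that measure is capped at $1$, so there are at most about $2^{n_{k+1}-n_k}(n_{k+1}-n_k)$ switches in total. This bound is too large if the gaps are measured naively. I would then compensate by making the gap much larger than $c$, for instance by using the constant $k$ against a gap of size $2^{k}$, or alternatively by replacing the minimization with a greedy choice among only a few candidates per block. If that balance fails, the fallback is a priority or forcing-style construction where blocks are reinjured only finitely and the test pays via a convergent cost function.
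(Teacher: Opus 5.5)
\textbf{The gap is the bound $\mu(U_k)\le 2^{-k}$, and it cannot be repaired by better accounting.} Your own estimates already fail. You have about $g2^{g}$ versions, where $g=n_{k+1}-n_k$, and each leaves up to $2^{k+2-g}$ in $U_k$. That totals about $g\,2^{k+2}$, and enlarging $g$ only makes it worse.

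More fundamentally, no small test can contain the single-length sets $[D_k(x\uhr n_{k+1})]$. Fix your uniformly c.e.\ $(U_k)$ and build a conditional machine $M$ as follows. For $n=n_{k+1}$, start with $M(0,\sigma)=\sigma$ for all $\sigma\in 2^n$. Keep every $\tau\in 2^n$ assigned to exactly one $\sigma$ with $[\sigma]\not\subseteq U_{k,s}$. When some $[\sigma]\subseteq U_{k,s}$ appears, reassign its at most two $\tau$'s to valid strings $\sigma'$ for which $M(0,\sigma')$ or $M(1,\sigma')$ is still undefined. A valid string's slots are used only by the $\tau$ currently assigned to it. So whenever a reassignment is pending, fewer than $2^n$ slots on valid strings are in use, while valid strings carry at least $2^{n+1}(1-\mu(U_k))$ slots. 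Hence $M$ never gets stuck at any $k$ with $\mu(U_k)<1/2$.

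Let $d$ be the coding constant of $M$ and $c=d+1$. No recursion theorem is needed, because $M$ is built from your already fixed test. Then every $\tau\in 2^{n_{k+1}}$, in particular $x\uhr n_{k+1}$, has some $\sigma\in D_c(\tau)$ with $[\sigma]\not\subseteq U_k$. But once block $k$ settles, your $U_k$ contains $[D_k(x\uhr n_{k+1})]\supseteq[D_c(x\uhr n_{k+1})]$ for $k\ge c$. So $\mu(U_k)\ge 1/2$ for all $k\ge c$, and $(U_k)$ is not a Solovay test. No switching threshold, few-candidate rule, or cost-function fallback escapes this. The adversary defends every $\tau$ at that length, whatever $x$ you choose.

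\textbf{What is missing is the constraint at all lengths.} $x\le_{rK}y$ requires $K(x\uhr m\mid y\uhr m)\le c$ for every $m$. So the relevant relation is $K(\tau\uhr m\mid\sigma\uhr m)\le c$ for all $m\le|\sigma|$, which is closed under taking prefixes on both sides. This is the paper's coding function $f_a$, and the machine $M$ above violates exactly this prefix-consistency. Only this tree structure lets a small test win.

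Even with it, the adversary's adaptivity is serious. The paper reduces the problem to the finite $(p,q,a,n)$ coding game and wins it with arbitrarily small cost (Theorem~\ref{optpropwin}). Capacity one uses a static strategy. Higher capacity uses an induction on $q/p$ and on capacity: force many codes per source, cut off empty codes, and use the capacity-minus-one observation to play capacity-$(a-1)$ games over $C_1$ while playing a smaller game over $C_0$. Finally, $x$ is assembled from winning strings across hierarchies, with the enumerated sets $A_a$ serving as the test.
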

Obviously, the witness $x$ is automatically
non 1-random.
For this $x$, the Ku\v{c}era--G\'{a}cs theorem still provides a
1-random $y$ that computes $x$. The theorem says that no such $y$,
and in fact no 1-random $y$ at all, gives uniformly bounded
conditional descriptions at matching lengths.
Nor can $x$ be left-c.e., by the completeness result above. Thus
the obstruction is not visible inside either of those two classes.
The statement concerns $rK$-reducibility specifically; it does not
assert the corresponding failure for the weaker $K$-reducibility.

The rest of this paper will prove Corollary \ref{optth0}.
We firstly reduce Corollary \ref{optth0} to a finite game question
(Section \ref{optseccodinggame}).
Then we derive the winning strategy for this game in the rest sections.
We end up this section by introducing some general notations.

\noindent \textbf{Notation.}
For $\sigma\in 2^{<\omega}\cup 2^\omega$, 
%we write $\sigma\uhr_m^n$ for the string $\sigma(m)\sigma(m+1)\cdots\sigma(n)$;
we write $\sigma\uhr n$ for the string $\sigma(0)\cdots\sigma(n-1)$;
we write $\sigma\succeq\rho$ if $\sigma$ extends $\rho$;
let $[\sigma]^\preceq $ be the set of strings extending $\sigma$;
%let $[\sigma,\h\sigma]$ denote the set of strings $\sigma\preceq\t\sigma \preceq\h\sigma$;
for a set $A\subseteq 2^{<\omega}$, let $[A]^\preceq = \cup_{\sigma\in A}[\sigma]^\preceq$;
write $[\sigma] = \{x\in 2^\omega:x\succeq\sigma\}$
and   $[A] = \cup_{\sigma\in A}[\sigma]$.
We often treat a natural number $n$ as the set $\{0,\cdots, n-1\}$.

For a set $A\subseteq 2^{<\omega}$, the measure of $A$,
denoted as $\m(A)$, refers to
the Lebesgue measure of $[A]$;
for $\sigma\in 2^{<\omega}$, we write $\m(\sigma)$ for $\m(\{\sigma\})$;
we write $\m(A|B)$ for $\m(A\cap B)/\m(B)$.

 %For a function $g$ on $2^{<\omega}$, we write $\int_{\sigma\in A}g(\sigma)$
%(or simply $ \int_A g$) for $\sum_{\sigma\in A}g(\sigma)\m(\sigma)$.
%Sometimes we use $A$  to denote the cardinality of $A$, so it makes sense
%to write $A\pm B$ etc (we use $A\setminus B$ to denote set minus).

% A  function $f:2^{\leq n}\rightarrow \fin(2^{\leq n})$ is \emph{\treelike}
% iff for every $\rho\in 2^{\leq n}$,
% $f(\rho) = \cup_{\h\rho\in [\rho]^\preceq\cap 2^n} f(\h\rho)\uhr |\rho|$.

\section{A coding game}
\label{optseccodinggame}

In this section, we will introduce a finite game and reduce 
Corollary \ref{optth0} to the winning strategy 
of that game.
This is natural in computability theory.
In this game, one player \alice\ controls a Martin-Lof 
test and the other player \baby\ controls the Kolmogorov
complexity function $K[t]$.
To simplify things, we don't need to record the full details of
the  complexity function
presented by \baby.
Note that given $a\in \omega$, 
for very $\rho\in 2^{<\omega}$,
there are at most $2^a$ many $\sigma\in 2^{|\rho|}$
such that $$K\bigg(\sigma\uhr m~\bigg|~\rho\uhr m\bigg)\leq a
\text{ for all }m\leq |\rho|.$$
For each $\rho$, \alice\ only concern the set of such $\sigma$.
We refer to $\rho,\sigma$ as \emph{code} and \emph{source} respectively.
We start with some notations of this game
and then give the concrete definition of this game in Definition \ref{optgame1}.

\noindent\textbf{Notations of forest and coding function.}
\begin{itemize}
\item For $p,n\in\omega$,
we think of set $p\times 2^{\leq n}$
as a forest (namely $p$ many trees).
For $(\h p,\sigma)\in p\times 2^{\leq n}$,
we usually don't need to specify $\h p$,
thus we often write 
 $\sigma$ instead of $(\h p,\sigma)$;
write $\sigma\preceq\tau$ instead of $(\h p,\sigma)\preceq(\h p,\tau)$;
and so forth. 
We let $\m$ denote the  measure on $p\times 2^{ n}$
so that $\m(\sigma) = 2^{-n}$ for all $\sigma\in p\times 2^n$.

 \item A \emph{coding function} from $q\times 2^{\leq n}$ to $p\times 2^{\leq n}$
  is a function
$$f:q\times 2^{\leq n}\rightarrow \mcal{P}(p\times 2^{\leq n})$$
such that for every $\rho\in q\times 2^{\leq n}$,
$f(\rho)\subseteq 2^{|\rho|}$ and
if $\sigma\in f(\rho)$   then prefix of 
$\sigma$ is also coded by the same-length prefix of $\rho$.
That is:
$$\sigma\uhr m\in f(\rho\uhr m)\text{ for all $m\leq |\sigma|$}.$$
We say $\sigma$ is \emph{$f$-coded}
(or simply say coded when there is no ambiguous) by $\rho$
iff $\sigma\in f(\rho)$.
A coding function $f$
is $a$-\emph{\capacity}\ iff for every $\rho\in dom(f)$,
$|f(\rho)|\leq a$.
For coding functions $\h f, f$ on $q\times 2^{\leq n}$,
we say $\h f$ \emph{extends} $f$ iff
for every $\rho\in q\times 2^{\leq n}$, $\h f(\rho)\supseteq f(\rho).$ 

\end{itemize}

Now comes the coding game. Let $p,q,a,n\in\omega^{>0}$.
\begin{definition}[$(p,q,a,n)$-coding-game]
\label{optgame1}
 At each round $t\in\omega$, \alice\ presents a set
 $ A[t]\subseteq q\times 2^n$
 with $A[t]\supseteq A[t-1]$ (when $t>0$); then \baby\ presents
a $a$-\capacity\ coding function $f[t]$ from $q\times 2^{\leq n}$
to $p\times 2^{\leq n}$ 
extending $f[t-1]$ (when $t>0$) such that,
\begin{itemize}
\item every source string $\sigma\in p\times 2^n$ admits
 a code $\rho\in q\times 2^n\setminus A[t]$. i.e.,
$\sigma \in f[t](\rho )$.
\end{itemize}
\alice\ \emph{wins} if at some round $t$, \baby\ violates the game rule.

\end{definition}
Let $A$ be the set \alice\ presents at the round she wins.
We show that \alice\ has a winning strategy with an arbitrary small cost.

\begin{theorem}\label{optpropwin}
For every $a,p,q\in\omega^{>0}$, every $c>0$, \alice\ has a winning strategy for the $(p,q,a,n)$-game \ref{optgame1} (for some $n$)
such that $\m(A)\leq c$.
\end{theorem}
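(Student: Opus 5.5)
The plan is to prove Theorem \ref{optpropwin} by a potential (or slack) argument. The quantity to track is how much unused coding capacity \baby\ still has at each level of the forest. Because a coding function respects prefixes, the number of codes at level $m$ is fixed: each level-$m$ code $\rho\uhr m$ in $q\times 2^m$ carries a set $f(\rho\uhr m)$ of at most $a$ sources of length $m$. Every source coded by an extension of $\rho\uhr m$ must extend one of these $a$ strings. So the code cone $[\rho\uhr m]$, of measure $2^{-m}$, can only serve sources inside at most $a$ source cones, of total measure at most $a2^{-m}$. Call a level-$m$ code node \emph{full} if $|f(\rho\uhr m)|=a$. The \emph{slack} at level $m$ is the amount of unused capacity, $\sum_{\rho\in q\times 2^m}(a-|f(\rho)|)2^{-m}$, together with the capacity wasted on sources that the code node can no longer usefully serve. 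Since $|f(\rho)|\le a$ for every code, \baby\ must at all times cover measure $p$ of sources using codes of total measure at most $q$ minus $\m(A)$, with multiplicity at most $a$. If $aq<p$, \alice\ wins trivially at $t=0$, so assume $aq\ge p$.

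\alice's strategy works in finitely many stages $k=1,\dots,K$, with associated levels $m_1<m_2<\dots<m_K\le n$ and small budgets $\varepsilon_k$ satisfying $\sum_k\varepsilon_k\le c$. At stage $k$, \alice\ waits for \baby's current $f$. She then looks at level $m_k$ and, inside each code cone $[\rho\uhr m_k]$, finds the codes of length $n$ carrying the most load, meaning the most sources that have no alternative code. She removes a set of measure $\varepsilon_k$ chosen among these heavily loaded codes, spread proportionally over the code cones. \baby\ must then recode the displaced sources using unremoved codes. The prefix condition says a new code $\rho'$ can code $\sigma$ only if, for every $j$, either $\sigma\uhr j\in f(\rho'\uhr j)$ already or $f(\rho'\uhr j)$ is not yet full. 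So each recoding either uses a node whose prefixes already commit to the right sources, or it spends fresh capacity at some short level, permanently making more nodes full. The key lemma to prove is a dichotomy. Either the slack at level $m_k$ decreases by a fixed fraction $\delta>0$, with $\delta$ depending only on $a,p,q$ and on $\varepsilon_k$ relative to the current slack, or \baby\ is forced to break the rule. Slack can never be recovered, because $f[t]$ only grows. Hence after boundedly many stages the slack at the chosen levels drops below what is needed to absorb a further removal.

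To finish, use the fact that once almost every code node at level $m$ is full and tightly matched to its $a$ sources, the covering is essentially rigid below level $m$. Suppose each code cone of measure $2^{-m}$ must cover exactly $a$ source cones of total measure $a2^{-m}$, with capacity $a$ per node. Then removing any positive measure of codes inside such a cone leaves some source in that cone with no admissible code. So \alice's last small removal, placed in the rigid part, wins. The levels $m_k$ must be chosen increasing and far apart, and $n$ taken large, so that the later stages have fresh levels of resolution to work at. To get $\m(A)\le c$, choose $\varepsilon_k$ geometrically, and note that the number of stages $K$ is bounded in terms of $a,p,q,c$ alone.

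The main obstacle is the dichotomy lemma: showing that a removal of measure only $\varepsilon_k$ forces a definite loss of slack, rather than letting \baby\ reroute the displaced sources through codes whose prefixes already happen to commit to the right sources. The first thing to try is to make \alice\ remove the codes that are ``cheapest'' for \baby, namely those inside cones that still have spare capacity. Then an averaging (Markov-type) argument should show that a positive fraction of the displaced source measure must be recoded through non-full prefixes, and hence consume fresh capacity. If that fails, the fallback is an induction on $a$. In cones where all prefixes are full, the sub-game is effectively a $(p',q',a,n')$-game with the ratio $p'/(aq')=1$ exactly. There the previous argument applies with no slack at all, and one would show that \alice's wins in the sub-cones can be combined into a win overall.
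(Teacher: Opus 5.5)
There is a genuine gap, and it sits at the dichotomy lemma you flag and at the rigidity step meant to finish. When $aq>p$, what keeps Baby alive is not unused capacity but \emph{redundancy}, meaning sources with several valid codes, and redundancy can sit entirely in full nodes.

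Here is a concrete counterexample. Take $a=2$, $p=q=1$, let $\pi$ flip the first bit of a string, and suppose Baby's current function is $f(\rho)=\{\rho,\pi(\rho)\}$ for $|\rho|\le\ell$ and $f(\rho)=\{\rho\}$ for $\ell<|\rho|\le n$, with $\ell\ge m_K$.
\begin{itemize}
\item This is a legal coding function, because $\pi$ commutes with restriction.
\item Every node at every level $1\le m\le\ell$ is full, codes exactly two sources, and wastes nothing. So by your definition the slack at every level you monitor is $0$ and can never drop by a fraction $\delta$.
\item Yet if Alice removes any set $E$ of codes inside a single cone $[\rho\uhr m]$ with $1\le m\le\ell$, Baby survives. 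Each $\sigma\in E$ is recoded by $\pi(\sigma)$, which lies in the disjoint cone $[\pi(\rho\uhr m)]$ and hence is valid. Baby adds $\sigma\uhr j$ to $f(\pi(\sigma)\uhr j)$ for $\ell<j\le n$; each node $\nu$ at such a level receives at most the one new element $\pi(\nu)$, so capacity $2$ is respected.
\end{itemize}
So neither branch of your dichotomy occurs. The claim that removing positive measure inside a full, tightly matched cone strands a source is false: a full code cone is not the only place its $a$ source cones are coded. Likewise, ``slack below what is needed to absorb a further removal'' has no meaningful threshold, since a reserve of tiny measure suffices to recode a few stranded sources. Your fallback does not repair this either. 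The sub-game inside a full cone is not a game in its own right, because its source cones may also be coded from other cones (as above), so its nominal ratio $1$ forces nothing. Its capacity is also still $a$, so nothing is actually being inducted on.

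The missing idea is the capacity-minus-one reduction. In the example, Alice wins by enumerating all current codes of one source $\sigma$ of length greater than $\ell$. Afterwards every valid node at levels $\ge|\sigma|$ already codes a source outside $[\sigma]^\preceq$. That region therefore has capacity only $a-1=1$ for $[\sigma]^\preceq$, and the static strategy of Lemma \ref{optlemcapacityone} strands an extension of $\sigma$ at small cost.

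The paper makes this systematic through a double induction, on $a$ and on $q/p$:
\begin{itemize}
\item Using the strategy for ratio $r$, it forces every source at a level $N$ to have more than $r$ codes, so that many codes are nonempty.
\item Claim \ref{optclaim10} then yields, at small cost, a region $C_1\subseteq q\times 2^m$ with $|C_1|\ge\frac{rp}{a}2^m$ in which every valid code is nonempty over a long range of levels.
\item Once the current codes of representatives $\tau_\sigma$ are enumerated, $C_1$ offers only capacity $a-1$ to each $[\tau_\sigma]^\preceq$. The capacity-$(a-1)$ strategy then repeatedly expels the $\tau_\sigma$ from $C_1$.
\item On the complement $C_0$, whose ratio is at most $r$, Alice runs the strategy for the smaller ratio.
\end{itemize}
The quantities that actually decrease are the capacity available to a source cone and the code-to-source ratio of the region that must carry it. They are not the unused capacity at a few fixed levels.
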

It is clear that if \alice\ has a winning 
strategy for the $(p,q,a,n)$-game \ref{optgame1}
with $\m(A)\leq c$, then the same holds with $n$ replaced by 
any $N>n$ since \alice\ could apply the winning strategy on level $n$.
Although winning $(1,q,a,n)$-game \ref{optgame1} implies that
 of $(p,q,a,n)$-game \ref{optgame1}, the parameters here are necessary for 
 proving Theorem \ref{optpropwin}.
We end up this section by
introducing more notation
and reducing Corollary \ref{optth0} to Theorem \ref{optpropwin}.

\noindent\textbf{More notation and terminology.}
Usually, we use $\rho $ to denote a code string and $\sigma$ to denote a
source string.
In a $(p,q,a,n)$-game \ref{optgame1}:
\begin{itemize}

 \item For a string $\sigma\in p\times 2^{<\omega}$, we write 
 $[\sigma]^{\leq n}$ for the set $(p\times 2^{\leq n})\cap [\sigma]^\preceq$
 (when there is no ambiguity);
write  $[\sigma]^{=n}$ for the set $(p\times 2^n)\cap [\sigma]^\preceq$;
 write $[\sigma]^{[m,n]}$ for the set $(p\times 2^{[m,n]})\cap [\sigma]^\preceq$.
 For a set $C\subseteq p\times 2^{<\omega}$, we write $[C]^{\leq n}$
 for the set $\cup_{\sigma\in C}[\rho]^{\leq n}$;
 similarly for $[C]^{=n}$ etc.
 
\item We say   $\sigma$ is   \emph{coded}  by $\rho$ (at round $t$) iff: $\sigma \in f[t](\rho )$;
 $\sigma$ is \emph{coded over $\t\rho$} (at round $t$) iff
for some $\rho\succeq \t\rho$, $\sigma\in f[t](\rho)$.
\item We let $f^{-1}(\sigma)$ denote the set of $f$-codes of $\sigma$.
For a set $C$ of codes,
we write $f(C)$ for the set $\bigcup_{\rho\in C}f(\rho)$.
A code $\rho$ is \emph{nonempty} (at round $t$) iff $f[t](\rho)\ne\emptyset$.
%$\sigma$ is \emph{validly coded by }$\rho$ (at round $t$) if $\rho\notin A[t]$ and $\sigma\in f[t](\rho)$.
 
 \item A code $\rho\in q\times 2^n$ is \emph{valid} (at round $t$) if $\rho\notin A[t]$
 (a code $\rho\in q\times 2^{\leq n}$ is \emph{valid} iff $[\rho]\setminus  [A[t]]\ne\emptyset$).
 We say $\sigma$ is \emph{validly coded over} $\rho$
 (or $\sigma$ is a \emph{valid source over} $\rho$)
(at round $t$) if there is some source string
in $  [\sigma]^{=n}$ coded by some valid code over $\rho$.
i.e., The code is in $   [\rho]^{=n}\setminus A[t].$

\item 
For real variable $u$, we write
$o_{u\rightarrow\infty}( 1)$ for some 
variable $v$ that $v\rightarrow 0$ as $u\rightarrow\infty$.
We write $o_{w,u\rightarrow\infty}(1)$
for some variable $v_w$ depending on $w$, we have for every
$w$,
 $v_w\rightarrow 0$ as $u\rightarrow\infty$.
We say:
for every $v$  \emph{sufficiently large depending on $u$},
$\psi(v,u)$ holds
iff there is a function $h$ such that
for every $u$ and every $v\geq h(u)$,
$\psi(v,u)$ holds.

\end{itemize}

\begin{proof}[Proof of Corollary \ref{optth0} assuming Theorem \ref{optpropwin}]
The connection between Game \ref{optgame1} and Corollary \ref{optth0} is the as follows.
Note that for every $a\in \omega$,
every $\rho\in 2^{<\omega}$, there are at most $2^a$ many 
$\sigma\in 2^{|\rho|}$ such that 
$$K\bigg(\sigma\uhr m~\bigg|~\rho\uhr m\bigg)\leq a
\text{ for all $m\leq |\rho|$}.$$
Thus, 
let $f_a[t](\rho)$ be the set of $\sigma\in 2^{|\rho|}$,
such that $$K[t]\bigg(\sigma\uhr m~\bigg|~\rho\uhr m\bigg)\leq a
\text{ for all $m\leq |\rho|$},$$
we have $f_a[t]$ is a computable sequence of coding function with 
capacity $2^a$ such that $f_a[t]$ extends $f_a[t-1]$.

\begin{itemize}
\item
For $a=1$, we can pick a sufficiently large level (to be specified)  $n_1\in\omega$
and applies the winning strategy for 
$(1,1,2^a,n_1)$-game \ref{optgame1} (where we take $f_a[t]$ as \baby's 
action)
with a small cost $\m(A_1)\leq 2^{-1}$
(where $A_1\subseteq 2^{n_1}$ is the set of strings we enumerate).
In the end, there will be 
a $\sigma^*\in 2^{n_1}$ (referred as \emph{winning string
at hierarchy 1}) that no longer admits any 
valid $f_a$-code. This entails that $$K\bigg(\sigma^*\uhr m~\bigg|~\rho\uhr m\bigg)>1$$
for some $m\leq n_1$ for all $\rho\notin A_1$.
Refer to this as the first hierarchy game.

\item  For $a=2$, we pick a sufficiently larger 
(to be specified) level $n_1+n_2$
and for each $\sigma\in 2^{n_1}$, consider source tree  $   [\sigma]^{\leq n_1+n_2}$  
versus code space
(consisting of $2^{n_1}$ many code trees) $(  [\rho]^{\leq n_1+n_2}: \rho\in 2^{n_1})$.
Each source tree give rise to a $(1,2^{n_1}, 2^a,n_2)$-game \ref{optgame1}
so there are $2^{n_1}$ many games. 
For each of these games,
 apply winning strategy for 
$(1, 2^{n_1}, 2^a, n_2)$-game \ref{optgame1}
so that the overall cost
of all these games satisfy $\m(A_2)\leq 2^{-2}$
(this is made possible since $n_2$ is sufficiently large depending on $n_1$).
In the end, 
for every $\sigma\in 2^{n_1}$,
there will be a $\sigma^*\in  [\sigma ]^{=n_1+n_2}$
(referred as winning string of $\sigma$ at hierarchy $2$)
that no longer admits any valid $f_a$-code.
This entails that $$K\bigg(\sigma^*\uhr m~\bigg|~\rho\uhr m\bigg)>2$$
for some $m\leq n_1+n_2$ for all $\rho\notin A_2$.
Refer to this as the second hierarchy game.

\item 
Similarly for other $a\in\omega$ addressed by games
at higher hierarchy.
\end{itemize}

 We emphasise that the game played in different hierarchy are 
 independent of each other.
 Let $\sigma_1^*\in 2^{n_1}$ be a winning string at hierarchy 1
 and $\sigma_2^*\in  [\sigma_1^*]^{=n_1+n_2}$ be a winning string 
 of $\sigma_1^*$ at hierarchy $2$ and so forth;
 and let $$x:= \cup_{a\in\omega^{>0}} \sigma^*_a.$$
 Let $y$ be 1-random. 
 It suffices to show that $$\sup_m K\bigg(x\uhr m~\bigg|~y\uhr m\bigg)=\infty.$$
 Since $\m(A_a)\leq 2^{-a}$ and $(A_a:a\in\omega^{>0})$ is c.e. uniformly in $a$,
there  exists some $a^*$ such that $y\notin [A_{  a}]$ for all $  a\geq a^*$.
 By our construction of $x$ (recall the definition of winning string)
 for every $a\geq a^*$, since $y\notin [A_a]$, there exists
 $m$ such that $$K\bigg(x\uhr m~\bigg|~y\uhr m\bigg)>a.$$
 Thus, we are done.

\end{proof}

The rest of this paper will prove Theorem \ref{optpropwin}.

\noindent\textbf{Organization.}
The proof proceeds by induction on $a$.
We firstly prove the theorem for capacity one in Section \ref{optseccapacityone}.
Then we prove
the result for  capacity two  in Section \ref{optseccapacitytwo}.
This serves as a easier example for how the induction proceeds 
from $a-1$ to $a$.
In Section \ref{optsecgeneralcapacity}, we prove Theorem \ref{optpropwin} for 
arbitrary capacity.

\section{Capacity one}
\label{optseccapacityone}

Our goal in this section is a winning strategy for capacity one
(for arbitrary $p,q$) with arbitrary small cost:

\begin{lemma}
\label{optlemcapacityone}
For every $p, q\in\omega^{>0}$, every $c>0$,
\alice\ has a static\footnote{A static strategy is a strategy that enumerate
everything in one round. This strategy unfortunately does not give
a winning strategy for capacity more than one.
} winning strategy 
for 
$(p,q,1,n)$-coding-game (for some $n$) such that $\m(A)\leq c$.
\end{lemma}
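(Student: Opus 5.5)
The plan is to exploit the rigidity that capacity one forces on \baby. If $|f(\rho)|\le 1$ for every $\rho$ and $f$ is a coding function, then any nonempty code $\rho\in q\times 2^n$ determines a single source $\sigma$, and $f(\rho\uhr m)=\{\sigma\uhr m\}$ for all $m\le n$. So the nonempty codes induce a length-preserving, prefix-preserving map $F$ from (part of) the code forest $q\times 2^{\le n}$ into the source forest $p\times 2^{\le n}$. In particular, each code tree is sent into a single source tree. \baby's rule then says that the valid leaves, those in $q\times 2^n\setminus A$, are mapped by $F$ onto all of $p\times 2^n$. Since the strategy is static, \alice\ picks $A$ once, in advance, and must show that no such tree map $F$ has the valid leaves covering all of $p\times 2^n$.

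The key quantity is a multiplicity. For a source node $\sigma$ at level $k$, let $N(\sigma)$ be the number of valid code nodes at level $k$ that $F$ maps to $\sigma$. Counting nodes level by level gives
\[
\textstyle\sum_{\sigma\in p\times 2^k}N(\sigma)\le q2^k,
\]
so the average multiplicity is at most $q/p$. The basic observation is this: if $N(\sigma)=j$, then the $j$ preimage subtrees must cover $[\sigma]^{=n}$ using their valid leaves. If every one of those subtrees has lost at least a $\delta$-fraction of its leaves to $A$, then $j=1$ is impossible, and more generally the children of $\sigma$ must on average carry extra multiplicity compared with $\sigma$.

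I would fix a number of scales $r$ larger than about $q/p$ (say $r=\lceil 2q/p\rceil+2$) and levels $k_1<k_2<\dots<k_r<n$ with large gaps. Near each level $k_i$, \alice\ removes a small, evenly spread set of leaves: under every code node at a suitable level just below $k_i$, she takes a fixed sparse pattern of leaves of relative measure $\delta_i$. The $\delta_i$ are chosen so that $\sum_i\delta_i\cdot q\le c$. Call a source node \emph{tight} if the total valid-leaf measure of its preimages only barely covers its subtree. I would then prove by induction on $i$ the following: after scale $k_i$, either the covering already fails, or the average multiplicity over the source nodes still in play at level $k_i$ (weighted by measure) is at least $1+i\eta$ for a fixed $\eta>0$ depending only on the pattern. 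The induction step combines the observation above, that removed leaves inside every preimage subtree force extra preimages, with the fact that a prefix-preserving map can only duplicate multiplicity going down the tree and never create fresh code nodes. Once $1+r\eta>q/p$, this contradicts the averaging bound, so covering must already have failed.

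The main obstacle is the induction step. \baby\ may rearrange: she can send the two children of a code node to the same source child, and so concentrate the valid mass from several damaged subtrees onto a single source node. The removed patterns have to be placed so that no such rearrangement can hide the loss. Spreading them uniformly and very sparsely under every node, with $k_{i+1}-k_i$ large, should make the loss per preimage almost exactly $\delta_i$ at every scale. I would first try to make this precise through a potential, the weighted sum over source nodes of (valid preimage leaf mass minus required mass), and show that this potential is nonincreasing under \baby's freedom at each scale while the removals at that scale lower it by a fixed amount. If the potential argument turns out too lossy, the fallback is induction on $q$ and $p$ themselves, which is why the lemma is stated for arbitrary $p,q$. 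One would restrict to a source node of multiplicity $j<q$ and apply the lemma for $(1,j,1,\cdot)$ inside its subtree at a deeper level, with a cost budget scaled by $2^{-k}$.
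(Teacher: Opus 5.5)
\textbf{There is a genuine gap.} Several pieces of your setup are right. Capacity one turns Baby's move into a family of length- and prefix-preserving tree maps, the bound $\sum_\sigma N(\sigma)\le q2^k$ holds, and a single preimage whose subtree has lost a leaf cannot cover. But the induction step is the whole proof, it is not supplied, and the tools you propose cannot supply it.

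\emph{The potential cannot drop.} Your potential cannot drop by a fixed amount per scale ``because of the removals''. The removals lower the valid mass by $q\delta_i$, and the total over all scales is at most $c$. A pure mass count only yields $q-c\ge p$, which Baby can satisfy. The contradiction must come from integrality. Integrality only bites if, at each scale, some code nodes become \emph{entirely} invalid at a definite level.

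\emph{Evenly spread lost leaves do not achieve this.} Take $p=1$, $q=2$, and remove the leaf $\tau0^s$ under every $\tau$ at level $n-s$ in both trees. Every code subtree rooted at or above level $n-s$ has then lost exactly a $2^{-s}$ fraction. Yet Baby covers everything: use the identity on tree $0$, and on tree $1$ use the identity with the bit right after each such $\tau$ flipped.

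\emph{The invariant points the wrong way.} The multiplicity forced at a level is determined by the damage lying \emph{below} it, not by the scales already passed. Since no code nodes are created going down, the average multiplicity over all sources can only decrease with depth. Your observation by itself yields only $j\ge 2$.

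\emph{The fallback has the same hole.} For $p=1$, $q$ copies of the identity give every source node multiplicity exactly $q$ at every level. So a node with $j<q$ exists only after Alice has killed whole cylinders.

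\textbf{A repair.} Make the damage at each scale a union of whole cylinders decided inside a window, and follow one source node downward instead of averaging.
\begin{enumerate}
\item Take disjoint blocks $I_1,\dots,I_r$ of length $s$ with $r=\lfloor q/p\rfloor$. Let $A$ be the set of codes with $\rho_{I_i}=0^{I_i}$ for some $i$; the cost is at most $qr2^{-s}$.
\item Call a code node clean if none of its completed blocks is zero. Let $c(\tau)$ be the number of clean code nodes coding $\tau$.
\item Some source root has $c\le r$.
\item Between blocks, some descendant has no larger $c$.
\item Across a block, the clean descendants number at most $c(\tau)(2^s-1)<c(\tau)2^s$. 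So some source at the end of the block has $c\le c(\tau)-1$.
\item After $r$ blocks some $\tau$ has $c(\tau)=0$, and then no source below $\tau$ has a valid code.
\end{enumerate}

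The paper's proof is the dual of this. There a code is valid iff \emph{some} block is zero. The free codes $0^m$ then cover at most $q$ sources per block. Pigeonhole with $2^m>q^2$ yields a source coded only by at most $q$ non-free codes. Induction on the number of blocks finishes the argument, at cost $q(1-2^{-m})^{\tilde k}$.
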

 \begin{proof}
 Without loss of generality, set $p=1$.
 Let $m$ be sufficiently large (depending on $q$)
 and $\t k$ be sufficiently large depending on $m$ (to be specified).
 Let $I_0,I_1,\cdots, I_{\t k-1}\subseteq n$ be mutually disjoint integer intervals
 with $|I_k| = m$ for all $k<\t k$.
 For a string $\rho$ and set $I\subseteq\omega$, we write $\rho_I$ to denote 
 the   restriction of $\rho $
 (as a function $|\rho|\rightarrow 2$) on $I$.
 Consider the 
 set $$
  A:= \{\rho\in q\times 2^n: \rho_{I_{ k}}\ne 0^{I_{ k}}\text{ for all } k< \t k\}.
 $$
We show that 
once \alice\ enumerates $A$, \alice\ wins.
For notation simplicity, we assume $I_0 = [0,m-1 ]$. 

We prove by induction on $ \t k$.
For $\t k = 1$. Note that (upon enumerating $A$) there are 
$q$ many valid codes at level $m$, so these codes can code 
at most $q$ many source strings; meanwhile
there are $2^m$ many source strings.
Thus, since $2^m>q$, we have \alice\ wins when $\t k = 1$.
Suppose \alice\ wins by enumerating $A$ when $\t k\leq \t k-1$
and now $\t k = \t k$.
Again,  at level $m$,
there are at least $ 2^m-q$ many source strings
coded by ($q(2^m-1)$ many) prefix of $A$
(and not coded by any non-prefix-of-$A$ codes).
 Since $m$ is sufficiently large,
 there is a source 
 that is coded by at most $q$ many prefix-of-$A$ codes 
 (and is not coded by any non-prefix-of-$A$ codes).
 It follows by induction hypothesis
(that \alice\ wins when $\t k =  \t k-1$)
that \alice\ wins.

 On the other hand, the cost of \alice\ is 
 $$
 \m(A) =  q(1-1/2^m)^{\t k}\leq c.
 $$
 The last inequality is due to $\t k$ sufficiently large
 depending on $m$.
 
 \end{proof}

 \section{Capacity two}
 \label{optseccapacitytwo}
 
 Our goal in this section is a winning strategy for capacity two
(for arbitrary $p,q$) with arbitrary small cost:
 
\begin{lemma}\label{optlemcapacity2multicodetree}
For 
every $p,q\in\omega^{>0}$, every $c>0$, \alice\ has a winning strategy for the
$(p,q,2,n)$-game \ref{optgame1}  (for some $n$)
such that $\m(A)\leq c$.
\end{lemma}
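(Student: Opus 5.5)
We reduce capacity two to capacity one by a two-phase adaptive strategy. Since winning the $(1,q,2,n)$-game gives the $(p,q,2,n)$-game, we may first fix one source tree, though we will need general $p,q$ internally. The idea is to let \alice\ act in stages. She waits until \baby\ has committed codes, and then uses the static capacity-one strategy of Lemma \ref{optlemcapacityone} on the part of the code space where \baby\ has effectively spent one unit of capacity.

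\textbf{Phase one (forcing double use).} Split $n=n_1+n_2$, with $n_1$ large depending on $q,c$ and $n_2$ large depending on $n_1$. On the first $n_1$ levels, \alice\ runs the static capacity-one strategy of Lemma \ref{optlemcapacityone} with cost $c/2$; call the enumerated set $A_1$, taken at level $n_1$ and then extended to level $n$. By Lemma \ref{optlemcapacityone}, a capacity-one coding could not survive this. Hence, at round $t$, \baby's coding $f[t]$ must contain some source $\sigma_0\in 2^{n_1}$ all of whose valid codes $\rho$ at level $n_1$ also code another source. In other words, every valid code of $\sigma_0$ uses both capacity slots. More precisely, running the counting argument of Lemma \ref{optlemcapacityone} shows more: there is a source $\sigma_0$ and a set $Q$ of at most $q'$ valid codes at level $n_1$ (with $q'$ bounded in terms of $q,n_1$) such that every valid code for extensions of $\sigma_0$ lies above $Q$. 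Moreover, each $\rho\in Q$ already has a second source $\sigma_\rho\neq\sigma_0$ in $f(\rho)$. By monotonicity of coding functions, any code above $\rho$ that codes an extension of $\sigma_0$ can code only extensions of $\sigma_0$ or of $\sigma_\rho$. Also, \baby\ can never later withdraw the extension of $\sigma_\rho$ unless $\rho$'s subtree stops coding $\sigma_\rho$ entirely.

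\textbf{Phase two (recursion).} \alice\ now plays in the subgame with source tree $[\sigma_0]^{\le n}$ and code forest $([\rho]^{\le n}:\rho\in Q)$, with $n_2$ further levels. Within each $\rho\in Q$, capacity is shared between extensions of $\sigma_0$ and of $\sigma_\rho$. She simultaneously threatens $\sigma_\rho$ by running the game on the tree $[\sigma_\rho]$ versus the codes over $\rho$. If \baby\ keeps $\sigma_\rho$ alive through codes over $\rho$, then those codes have effective capacity one for extensions of $\sigma_0$. \alice\ then applies Lemma \ref{optlemcapacityone} with parameters $(1,|Q|,1,n_2)$ and cost $c/4$ to win. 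If instead \baby\ abandons $\sigma_\rho$ over $\rho$, he must code $\sigma_\rho$'s extensions elsewhere. This is charged to a fresh instance of phase one on a new block of levels. The multiplicity parameter $p$ is exactly what keeps track of the several source trees $\sigma_\rho$ that must be handled at once. This is why the lemma must be proved for general $p,q$.

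\textbf{Main obstacle.} The hard step is the case analysis in phase two. \baby\ may split, with some codes over $\rho$ devoted to $\sigma_0$ and some to $\sigma_\rho$. Each code node is then only partly double-used, so Lemma \ref{optlemcapacityone} does not apply verbatim. My first attempt is a measure-budget pigeonhole argument. Iterate the phase one/phase two pair over $k$ disjoint level blocks, with geometrically decreasing costs $c2^{-j-2}$. In each iteration, either \alice\ wins, or \baby\ loses a definite fraction of the valid codes in a bounded code forest whose size stays bounded independently of $k$. After boundedly many iterations, by a choice of $k$ depending on $q$, no valid codes remain, and \alice\ wins with total cost at most $c$. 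I expect that establishing the ``definite fraction'' decrease will need the full counting of Lemma \ref{optlemcapacityone}'s induction on $\tilde k$, not just its statement.
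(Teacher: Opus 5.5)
Your proposal has a genuine gap. It begins in phase one, and the step you call the main obstacle is left unresolved.

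\textbf{Phase one does not give what you claim.} The inference ``a capacity-one coding could not survive, hence some $\sigma_0$ has all its valid level-$n_1$ codes double-used'' does not follow from Lemma~\ref{optlemcapacityone}. Discarding the double-used leaf codes does not leave a capacity-one coding function, because intermediate code nodes may still carry two source nodes. The proof of Lemma~\ref{optlemcapacityone} uses capacity one exactly at those intermediate levels: the $q$ codes that are $0$ on $I_0$ carry at most $q$ sources. For $p=1$, $q\ge 2$ no counting rescues the claim, since valid leaf codes outnumber sources by a factor close to $q$. For $q=1$ it holds trivially by counting and so extracts nothing from the static strategy.

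\textbf{$\sigma_0$ and $Q$ are not frozen.} Even if they exist at round $t$, coding functions only grow. Baby may later add $\sigma_0$ or its extensions to codes that are currently empty, and such codes have capacity two for $[\sigma_0]^\preceq$.

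\textbf{The splitting problem in phase two is not solved.} Threatening $\sigma_\rho$ through codes over $\rho$ costs Baby nothing. The permanence of $\sigma_\rho\in f(\rho)$ says nothing about deeper codes over $\rho$. Baby can code the extensions of $\sigma_\rho$ elsewhere and leave those deep codes free, with two slots, for $[\sigma_0]^\preceq$. Your ``definite fraction'' decrease is asserted without a mechanism, and nothing in your recursion guarantees termination or a summable cost.

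\textbf{The missing idea} is to make a large code region \emph{permanently nonempty} before invoking capacity one, and to induct on the ratio $q/p$ rather than reduce to $p=1$. This is how the paper's proof works:
\begin{enumerate}
\item Using the inductive hypothesis for ratio $(r-1)/2$, Alice forces every source at a level $N$ to have at least $r/2$ codes. Since capacity is two, at least half of the level-$N$ codes are then nonempty.
\item Claim~\ref{optclaim10}, a finite zero-one law, lets her enumerate a small set so that for some $C_1\subseteq q\times 2^m$ with $|C_1|\ge \frac q2 2^m$, every valid code in $[C_1]^{\le m+\h n}$ is nonempty. Because codes never lose sources, each such code already holds a source outside $[\sigma]^\preceq$ for any $\sigma$ it does not currently serve. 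So once Alice enumerates the few current codes of a representative $\tau_\sigma$, the region over $C_1$ has capacity one for $[\tau_\sigma]^\preceq$ forever. This is precisely what defeats Baby's splitting.
\item The static strategy of Lemma~\ref{optlemcapacityone} then ensures that no $\tau_\sigma$ can again be validly coded solely over $C_1$. Every $\sigma\in p\times 2^{m+\t n}$ must therefore be coded over $C_0$, where $|C_0|\le\frac{r-1}{2}p2^m$.
\item The inductive hypothesis finishes the game on $C_0$. The strictly decreasing ratio is what makes the recursion terminate with small total cost.
\end{enumerate}
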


 \noindent\textbf{Induction  framework.}
 The proof proceeds by induction on $q/p$.
 For $q/p\leq 1/2$,
 the proof is obvious.
 There  are $\geq 2q2^n$ many sources (in level $n$)
 and $q2^n$ many codes while each code can code at most $2$ sources.
 Thus, enumerating any code wins the game.
 Suppose   Lemma \ref{optlemcapacity2multicodetree}  holds for $q/p\leq (r-1)/2$
 and now $q/p\leq r/2$ (where $2\leq r\in\omega$).
 The winning strategy consists of two phases. In the first phase,
 \alice\ forces that every source (at some middle level $\t n$
 depending on \baby's response in this phase) is coded in
 a smaller coding space (where this smaller coding space satisfies $q/p\leq (r-1)/2$);
 in the second phase, she applies the winning strategy for the case $q/p\leq (r-1)/2$
 on this smaller coding space. The first phase is the key. We explain the intuition for 
 the first phase strategy. 

\noindent \textbf{Intuition for the first phase.}
  \alice\ will firstly forces each source in level $N$
 to have more than $(r-1)/2$ many codes
 (the cost is arbitrary small by induction hypothesis on
 the case $p/q\leq (r-1)/2$).
 Then
 (without \alice\ doing anything)
  there will be  a non-small set $\C_1$ in some middle level $m$
 and another level $m+\h n$ so that over $\C_1$ up to level $m+\h n$,
 almost all codes are non empty.
 \alice\ therefore, enumerates a small set to entail that every valid
 code over $\C_1$ below level $m+\h n$ is non empty. The key note here is:
 \begin{align}\nonumber
 &\text{If }\sigma\in p\times 2^{|\rho|}\text{ is not currently coded by }\rho\text{ and 
 every valid}\\ \nonumber
 &\text{code over $\rho$ is nonempty, then the coding space 
 over $\rho$ has}\\ \nonumber
 &\text{valid capacity \emph{one} for the source
 space $[\sigma]^\preceq$.}
 \end{align}
 This enables \alice\ to invoke the winning strategy for capacity one
game to entail that enough many source is coded over $\C_0$
(the complement of $\C_1$ at level $m$).  
On the other hand, $\C_0$ is promised to be not too large and therefore
enables \alice\ to apply the winning strategy for the case $p/q\leq (r-1)/2$.

 \def\C{C}
 \ \\
 
\noindent \textbf{Wining strategy sketch.}
\begin{enumerate}
\item 
First phase:
\alice\ will apply a strategy (with considerable effort and to be specified)
to achieve the following.
For some    $m\in\omega$ (to be specified)  
and sufficiently large  (depending on $m$ and to be specified) $\t n\in\omega$,
the following holds.
There is a partition of the coding space $q\times 2^m$ into two parts,
 namely $\C_0$ and $ \C_1$
 such that $|\C_1|\geq \frac{q}{2}\cdot 2^m$
 (so $|\C_0|\leq \frac{q}{2}\cdot 2^m\leq\frac{r-1}{2}p\cdot 2^m$).
Furthermore,\footnote{We mean in all future round after the strategy is applied.
Moreover, \alice\ need not go back to coding space $\C_1$
(to maintain (\ref{optphaseonetarget})) once the strategy is applied.} 
\begin{align}\label{optphaseonetarget}
&\text{No source $\sigma\in p\times 2^{m+\t n}$ 
can be solely }\\ \nonumber
&\text{validly coded in $[\C_1]^\preceq$ in the future.
}
\end{align}

\item Second phase:
Now \alice\ treat each $ \rho\in \C_0$
as a root of a coding tree, 
each $\sigma\in p\times 2^m$ 
as the root of a source tree, and apply the winning strategy 
(for $(p2^m, [\frac{(r-1)p}{2}2^m], 2, \t n)$-game \ref{optgame1})
at level $m+\t n$.
By (\ref{optphaseonetarget}), the induced game
complies to the definition of Game \ref{optgame1}
(in the sense: every source in level $m+\t n $ is validly coded over $\C_0$).
By induction hypothesis (for the case $p/q\leq (r-1)/2$),
\alice\ wins the game (with an arbitrary small cost
since $\t n$ is sufficiently large depending on $m$).

\end{enumerate}

Clearly, it remains to address the strategy of the first phase
which is the key of the proof.

 \ \\

\noindent\textbf{First phase strategy.}

\begin{enumerate}
\item First phase step 1:
force at least $  r/2 $ many codes for each source in level $N<n$. 
For each source $\sigma\in 2^N$,  before $\sigma$ admits
more than $ (r-1)/2$ many codes,
apply the winning strategy (promised by inductive hypothesis)
with respect to source tree $ [\sigma]^{\leq n}$
and (at most) $(r-1)/2$ many code trees (whose root are
codes of $\sigma$) to force $\sigma$ to admit
more than $ (r-1)/2$ many codes
\footnote{\alice\ do not require these codes to be valid.
Note that this does \emph{not} fill up the coding space at level $N$ since 
each code has capacity $2$. It only fill up half of the coding space at level $N$.
So this step cannot win the game \ref{optgame1} directly.}.
Or equivalently, to force
\begin{align}\nonumber
\text{Every $\sigma\in 2^N$ admitting at least 
$r/2$ many codes.}
\end{align}

\item First phase step 2:
cut off empty coding space.
This is the key step.
Since now every source $\sigma\in 2^N$ admits
at least $r/2$ many codes (and since $q/p\leq r/2$), we have 
at least $\frac{q}{2}2^N$ many codes
(in level $N$) are non empty.
Therefore,
\alice\ enumerates a small set $A_2 $
(promised by Claim \ref{optclaim10}
where we substitute set $C$ by the set of non empty codes in level $N$)
that will achieve the following.
For some set $\C_1\subseteq q\times 2^m$
(for some $m<N$) with  $|\C_1| \geq \frac{1}{2}q\cdot 2^m,$ 
some  $\h n$ sufficiently large depending on $m$
(with $m+\h n\leq N$),
the following holds:
\begin{align}\nonumber
\text{Every valid code in $   [\C_1]^{{\leq m+\h n}}  $
is non empty.}
\end{align}

\item First phase step 3:
fixing the coding function.
The key note here is that 
for every valid code $\rho\in \C_1$,
every source $\sigma\in p\times 2^m$
that is not (currently) coded by $\rho$,
\begin{align}\nonumber
&\text{The coding space $ [\rho]^{\leq m+\h n}$
has valid }\\ \nonumber
&\text{capacity \emph{one} for 
the source space $[\sigma]^\preceq$}
\end{align}
since every valid code
\footnote{For non valid code, it is possible to be empty. But this is useless since 
it could no longer validly code anything. } in this coding space is occupied
by at least one source outside $[\sigma]^\preceq$. 
Therefore, for a source $\sigma\in p\times 2^{[m, m+\h n]}$,
if \alice\ enumerate all current codes of $\sigma$,
then in the future the whole coding space $  [\C_1]^{\leq m+\h n}$ only have 
valid capacity one for source space $[\sigma]^\preceq$. But \alice\ has 
a static winning strategy for capacity one game, thus \alice\ could 
drive $\sigma$ away to $\C_0$ (by enumerating a small set). 
Indeed, \alice\ will achieve this at some middle level between $m,m+\h n$
(the reserved levels are to ensure that the cost of \alice\ for the capacity one game
etc has arbitrary small cost)

More specifically, let $\t n\in\omega$ be sufficiently large depending on $m$
(to be specified) and yet $\h n-3\t n$ is sufficiently large depending on 
$m+3\t n$. Let $f$ denote the current coding function of \baby.
\alice\ enumerate a small set $A_3$
to achieve:
\begin{align}\label{opteqfirstphasestep3target}
%&\text{For any coding function $\h f$ \baby\ presents (in the future),
%}\\ \nonumber
&\text{For any $\sigma \in p\times 2^{m+3\t n}$, in the future, $[\sigma]^\preceq$ cannot be solely
}\\ \nonumber
&\text{validly coded  in coding space }    [\C_1]^\preceq \setminus [f^{-1}(\sigma)]^\preceq.
\end{align}
This is done by applying the winning strategy for capacity one game
(Lemma \ref{optlemcapacityone}), namely
$(1,q2^{m+3\t n}, 1, \h n-3\t n)$-game \ref{optgame1}
(where roots of coding space are: $\rho\in (q\times 2^{m+3\t n})\setminus f^{-1}(\sigma)$)
with respect to   source space  $   [\sigma]^{\leq m+\h n}$ 
for each $\sigma\in p\times2^{m+3\t n}$
(i.e., The game is played $p2^{m+3\t n} $ times
with each time addressing one $\sigma\in p\times 2^{m+3\t n}$).

\item First phase step 4:
driving away each $\sigma\in p\times2^{m+\t n}$
(to outside $[\C_1]^\preceq$).
Since   (\ref{opteqfirstphasestep3target}) is achieved,
for each   $\t\sigma\in   [\sigma]^{=m+3\t n}$,
 enumerating all   codes of $\t\sigma$ entails 
that $\t\sigma$ can no longer be solely validly coded in $[\C_1]^\preceq$.
For each $\sigma\in p\times2^{m+\t n}$,
\alice\ choose a representative $ \tau_\sigma\in[\sigma]^{=m+ 3\t n} $ so that it does not admit too many codes in $[\C_1]^\preceq$
(namely $\leq 2q\cdot 2^{m+\t n}$).
Clearly, this accomplished 
the first phase target (\ref{optphaseonetarget}):
that each of $(\tau_\sigma: \sigma\in p\times 2^{m+\t n})$, therefore each $\sigma\in p\times2^{m+\t n}$,
 must be validly coded 
somewhere in $[\C_0]^\preceq$ in the future
(and \alice\ need not come back to $\C_1$ to maintain this).

\end{enumerate}
We look at the cost:
\begin{itemize}
\item
The cost in step 4 is bounded by:
\begin{align}\nonumber
\sum_{\sigma\in p\times 2^{m+\t n} }  \m(\tau_\sigma)\cdot |f^{-1}(\tau_\sigma)|&\leq p2^{m+\t n}\cdot \m(\tau_\sigma)\cdot 2q\cdot 2^{m+\t n}  \\ \nonumber
&\leq 2pq\cdot 2^{2m-\t n}
=o_{m,\t n\rightarrow\infty}(1).
\end{align}
\item The cost for step 3
is, by Lemma \ref{optlemcapacityone},
 $o_{m,\t n,\h n-3\t n\rightarrow\infty}(1)$.
\item The cost for step 2 is, by Claim \ref{optclaim10},
$o_{\h n, N\rightarrow\infty}(1)$
(note that it  depends on how large \alice\ want  $\h n$ to be).
\item The cost for step 1 is, by inductive hypothesis,
$o_{N,n\rightarrow\infty}(1)$ (which also depends on how large 
\alice\ want $N$ to be).
\end{itemize}
The choice of parameters are as following:
$n$ is sufficiently large in the sense that: there is an $N\leq n$ 
such that $n-N$ is sufficiently large depending on $N$ 
and $N$ is sufficiently large in the sense that:
there are $m<\t n<\h n $ with $m+\h n\leq N$ such that 
$\h n-3\t n$ is sufficiently large depending on $m+3\t n$
and $\t n$ is sufficiently large depending on $m$.
Thus, the overall cost for \alice\ in the first phase is 
$$o_{m,\t n\rightarrow\infty}(1)+
o_{m,\t n,\h n-3\t n\rightarrow\infty}(1)
+o_{\h n, N\rightarrow\infty}(1)+ o_{N,n\rightarrow\infty}(1)=o_{n\rightarrow\infty}(1).$$
It remains to address step 2 of the first phase.

 \ \\
 
\noindent\textbf{Cut off the empty codes.}

The following Claim says that
in step 2 of first phase, by enumerating 
a small set, there will be a (non small) part of the coding space and
 two levels so that every code in that part and between that 
 two levels is non empty.
 Recall the well known fact that given a 
 positive pruned tree $T\subseteq 2^{<\omega}$ (meaning $\m([T])>0$),
% This is alike the zero-one law for closed set
we can find a set $\h C\subseteq 2^m$ (for some $m$)
so that $\m([T]|[\h C])$ is arbitrary close to one
(in particular, $\m(T\cap 2^{m+\h n}|\h C)$ is close to one
for all $\h n\in\omega$).
The following Claim is a finite analog of this fact.

\begin{claim}
\label{optclaim9}
Let $\h n\in\omega$, $c>0$ and let $N\in\omega$ be sufficiently large
(depending on $ \h n, c$).
For every $\varepsilon>0$, every set $C\subseteq   2^N$ so that 
$\m(C)\geq \varepsilon $,
there exist  two 
 sets:
$\h C\subseteq 2^{\h m} $ (for some $\h m $ with 
$\h m+\h n<N$) with $\m(\h C)\geq \varepsilon $
and $A\subseteq 2^N$ with $\m(A)\leq c$
 such that the following holds.
For every $\rho\in  [\h C]^{=N}\setminus A$, 
we have $\rho\uhr m$ is a prefix of some element of $ C$
for all $\h m\leq m\leq \h m+\h n$.
\end{claim}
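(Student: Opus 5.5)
The plan is a pigeonhole argument on the level-by-level measure of the prefix tree of $C$. Let $T=\{\tau\in 2^{\le N}:\tau\preceq\rho\text{ for some }\rho\in C\}$ be the tree of prefixes of elements of $C$. For $k\le N$ put $\mu_k=\m(T\cap 2^k)$. Since every element of $T\cap 2^{k+1}$ has its length-$k$ prefix in $T\cap 2^k$, we have $[T\cap 2^{k+1}]\subseteq[T\cap 2^{k}]$. Hence $\mu_k$ is nonincreasing in $k$, with $\mu_0\le 1$ and $\mu_N=\m(C)\ge\varepsilon$. Because $T$ is closed under prefixes, a string $\rho\in 2^N$ satisfies ``$\rho\uhr m\in T$ for all $\h m\le m\le \h m+\h n$'' if and only if $\rho\uhr(\h m+\h n)\in T$. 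So only the single level $\h m+\h n$ matters, and the question becomes how much measure is lost between levels $\h m$ and $\h m+\h n$.

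First, I would choose $N$ large enough that $L:=\lfloor (N-1)/\h n\rfloor\ge 1/c$; this depends only on $\h n$ and $c$, as required. Consider the $L$ consecutive blocks of levels $[j\h n,(j+1)\h n]$ for $j<L$, all contained in $[0,N-1]$. The drops $\mu_{j\h n}-\mu_{(j+1)\h n}$ are nonnegative and telescope to a total of at most $\mu_0-\mu_{L\h n}\le 1$. By pigeonhole, some $j$ has drop at most $1/L\le c$. Set $\h m=j\h n$, which gives $\h m+\h n\le N-1<N$.

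Next, define $\h C=T\cap 2^{\h m}$. Monotonicity gives $\m(\h C)=\mu_{\h m}\ge\mu_N\ge\varepsilon$. Define
\[
A=\{\rho\in[\h C]^{=N}:\rho\uhr(\h m+\h n)\notin T\}.
\]
Then $[A]=[\h C]\setminus[T\cap 2^{\h m+\h n}]$, and since $[T\cap 2^{\h m+\h n}]\subseteq[\h C]$, we get $\m(A)=\mu_{\h m}-\mu_{\h m+\h n}\le c$. For $\rho\in[\h C]^{=N}\setminus A$ we have $\rho\uhr(\h m+\h n)\in T$, and by prefix-closure $\rho\uhr m\in T$ for every $m\le \h m+\h n$, in particular for all $\h m\le m\le\h m+\h n$. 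That is exactly the conclusion of the claim.

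The only real step is the pigeonhole choice of the block of levels; everything else is bookkeeping about prefix-closure. The point to be careful with is that $N$ must depend only on $\h n$ and $c$, not on $\varepsilon$ or $C$. This holds because the total drop is bounded by $1$ regardless of $\varepsilon$. The lower bound $\m(\h C)\ge\varepsilon$ comes for free from monotonicity of $\mu_k$.
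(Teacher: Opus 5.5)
Your proposal is correct and is essentially the paper's argument. The paper's sets $C_k\subseteq 2^{k\hat n}$ (strings that are not prefixes of elements of $C$ and do not extend an earlier $C_j$) have measure exactly your drops $\mu_{(k-1)\hat n}-\mu_{k\hat n}$. Both proofs pigeonhole over these drops to choose $\hat m=(k^*-1)\hat n$, obtaining the same $\hat C=T\cap 2^{\hat m}$ and $A=[C_{k^*}]^{=N}$; your prefix-tree formulation, and the remark that only level $\hat m+\hat n$ matters, just make the bookkeeping more transparent.
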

\begin{proof}
%This is alike the zero-one law,
%regarding $A$ as a closed set.
The proof simply proceeds by treating prefix of $C$ 
as the closed set in the above analog fact.
We inductively define a sequence of 
sets $( C_k\subseteq 2^{k\h n}:k\leq \t k)$ 
(with $\t k$ sufficiently large depending on $c$) so that each of them is a kind of complement of $C$.
Let $ C_1$ be the set of $\rho\in 2^{\h n}$
such that $\rho$ is not a prefix of any element of $C$.
i.e., $[\rho]^\preceq\cap C=\emptyset$.
Let $ C_2 $ be the set of $\rho\in 2^{2\h n}$ so that 
$\rho$ is not a prefix of any element of $C$ and 
$\rho\notin [ C_1]^\preceq$. And so forth.
Clearly $([ C_k]:k\leq \t k)$ are mutually disjoint open sets.
So
\begin{align}\nonumber
\sum_{k\leq \t k} \m( C_k)\leq 1-\m(C)\leq  1-\varepsilon.
\end{align}
Therefore,
since $\t k$ is sufficiently large depending on $c$,
 for some $k^*\leq \t k$, we have
$\m( C_{k^*})\leq c$.
Now let $$\h C:= 2^{(k^*-1)\h n}\setminus (\cup_{ k<k^*} [ C_{ k}]^\preceq)$$
we have $\m(\h C)\geq \varepsilon$.
Let $A:= [ C_{k^*}]^{=N}$, so $\m (A)=\m( C_{k^*})\leq c$.
Obviously, by choice of $ C_{k^*}$ and $\h C$,  
let $\h m:= (k^*-1)\h n$, we have for every $\rho\in    [\h C]^{=N}\setminus A$, every $\h m\leq m\leq \h m+\h n$,
$\rho\uhr m$ is a prefix of some element of $ C$.

\end{proof}

\begin{claim}\label{optclaim10}
Let $q\in\omega^{>0}$, $c>0$ and let $N\in\omega$ be sufficiently large
(depending on $q,c$).
For every $\varepsilon>0$, every set $C\subseteq  q\times 2^N$ so that 
$\m(C)\geq \varepsilon q $,
there exist  two 
 sets 
$\h C\subseteq q\times 2^{\h m} $ (for some $\h m<N$) with $\m(\h C)\geq \varepsilon q $
and $A\subseteq q\times 2^N$ with $\m(A)\leq c$
 such that the following holds.
For some $\h n$ sufficiently large depending on $\h m$ with $\h m+\h n\leq N$,
every $\rho\in  [\h C]^{=N}\setminus A$,
we have $\rho\uhr m$ is a prefix of some element of $ C$ for all $\h m\leq m\leq \h m+\h n$.
\end{claim}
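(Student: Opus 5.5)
The plan is to rerun the proof of Claim \ref{optclaim9}, with two changes: the blocks of levels get increasing lengths instead of a fixed length $\h n$, and the whole argument is carried out in the forest $q\times 2^{\leq N}$. The growing block lengths are what make the final $\h n$ ``sufficiently large depending on $\h m$''. Fix in advance the function $h$ that expresses this requirement; $N$ will be allowed to depend on $h$ as well as on $q,c$. Put $\t k:=\lceil q/c\rceil+1$. Define levels $L_0:=0$ and $L_k:=L_{k-1}+n_k$, where $n_k:=h(L_{k-1})$ (increased if needed so that $n_k\geq 1$). Take $N\geq L_{\t k}$. Then $N$ depends only on $q$, $c$ and $h$, as required.

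Next, define sets $C_k\subseteq q\times 2^{L_k}$ for $1\leq k\leq \t k$, exactly as in Claim \ref{optclaim9}. Let $C_k$ be the set of $\rho\in q\times 2^{L_k}$ such that $\rho$ is not a prefix of any element of $C$ and $\rho\notin[C_j]^\preceq$ for all $j<k$. The open sets $[C_k]$ are pairwise disjoint, and each of them is disjoint from $[C]$. Since the total measure of the forest is $q$ and $\m(C)\geq\varepsilon q$, we get
\[
\sum_{k\leq\t k}\m(C_k)\leq q-\varepsilon q\leq q.
\]
Because $\t k>q/c$, there is some $k^*\leq\t k$ with $\m(C_{k^*})\leq c$. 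We then set
\[
\h m:=L_{k^*-1},\qquad \h n:=n_{k^*}=h(\h m),\qquad \h C:=(q\times 2^{\h m})\setminus\textstyle\bigcup_{j<k^*}[C_j]^\preceq,\qquad A:=[C_{k^*}]^{=N}.
\]
With these choices $\h m+\h n=L_{k^*}\leq N$, and $\h n$ is sufficiently large depending on $\h m$ by the definition of the $n_k$. Also $\m(A)=\m(C_{k^*})\leq c$.

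Two things remain to verify. First, $\m(\h C)\geq\varepsilon q$. Every element of $C$ avoids each $[C_j]$, so the prefix of length $\h m$ of each element of $C$ lies in $\h C$. Hence $[C]\subseteq[\h C]$, and so $\m(\h C)\geq\m(C)\geq\varepsilon q$. Second, the prefix property. Take $\rho\in[\h C]^{=N}\setminus A$. Its restriction $\rho\uhr L_{k^*}$ is not in $C_{k^*}$ and does not extend any $C_j$ with $j<k^*$, because $\rho\uhr\h m\in\h C$. By the definition of $C_{k^*}$, the string $\rho\uhr L_{k^*}$ must therefore be a prefix of some element of $C$. All shorter prefixes $\rho\uhr m$ with $\h m\leq m\leq\h m+\h n$ are then prefixes of that same element.

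The only genuinely new point compared with Claim \ref{optclaim9}, and the step I expect to need care, is the order of quantifiers. The ``sufficiently large depending on $\h m$'' condition has to be built into the block lengths before $k^*$ is known. This works because the pigeonhole step produces $k^*$ from a finite list $k\leq\t k$ whose length depends only on $q/c$. So $N=L_{\t k}$ can be fixed in advance from $q$, $c$ and $h$, and the bound on $N$ does not depend on $\varepsilon$.
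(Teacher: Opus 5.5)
Your proof is correct and follows the paper's own route: the paper proves this claim by rerunning Claim \ref{optclaim9} with the $C_k$ placed at levels $m_1<\cdots<m_{\tilde k}$, where each gap $m_{k+1}-m_k$ is sufficiently large depending on $m_k$, and this is exactly your $L_k$ construction. Your write-up correctly supplies the details the paper leaves implicit, namely the pigeonhole with $\tilde k>q/c$ in a forest of total measure $q$, the inclusion $[C]\subseteq[\hat C]$ that gives $\m(\hat C)\geq\varepsilon q$, and the dependence of $N$ on the growth function $h$ as well as on $q,c$.
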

\begin{proof}
Similarly as Claim \ref{optclaim9}
except that now we require $\h n $ to be sufficiently large depending on $\h m$.
Instead of letting
$  C_k $ to sit in level $k\h n$,
we choose a sequence $   m_1<\cdots <  m_{\t k}$ so that 
$  m_{k+1}-  m_k $ is sufficiently large depending on $  m_k$
and let $  C_k$ sit in level $ m_k$. 
 
\end{proof}

\section{Arbitrary capacity}
\label{optsecgeneralcapacity}

Fix $a\in\omega^{>0}$.
 Our goal in this section is a winning strategy for capacity $a$
(for arbitrary $p,q$) with arbitrary small cost:
 
\begin{lemma}\label{optlemcapacityamulticodetree}
For 
every $p,q\in\omega^{>0}$, every $c>0$, \alice\ has a winning strategy for the
$(p,q,a,n)$-game \ref{optgame1}  (for some $n$)
such that $\m(A)\leq c$.
\end{lemma}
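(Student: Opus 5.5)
We prove Lemma \ref{optlemcapacityamulticodetree} by an outer induction on the capacity $a$, with the cases $a=1$ (Lemma \ref{optlemcapacityone}) and $a=2$ (Lemma \ref{optlemcapacity2multicodetree}) as the model. For fixed $a\geq 2$ we assume the lemma for all capacities $a'<a$ and all $p,q$. We then run an inner induction on the ratio $q/p$, in steps of $1/a$. The base case $q/p\leq 1/a$ is trivial: at level $n$ there are at least $a q 2^n$ sources and only $q2^n$ codes, each coding at most $a$ sources. So enumerating any single code of measure at most $c$ breaks the rule. For the inner step we assume the claim for $q/p\leq (r-1)/a$ and take $q/p\leq r/a$. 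As in the capacity-two section, the strategy has two phases. Phase one produces a partition $q\times 2^m=\C_0\cup\C_1$ with $|\C_1|\geq \frac{q}{a}2^m$, so that $|\C_0|\leq \frac{r-1}{a}p2^m$. It also secures an analogue of (\ref{optphaseonetarget}): no source at some level $m+\t n$ can in the future be solely validly coded inside $[\C_1]^\preceq$. Phase two applies the inner induction hypothesis to the $(p2^m,|\C_0|,a,\t n)$-game rooted at $\C_0$, with cost $o_{\t n\to\infty}(1)$.

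Phase one follows the four steps of the capacity-two case.

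\textbf{Step 1.} Using the inner hypothesis, we force every source $\sigma$ at a level $N$ to have more than $(r-1)/a$ codes. Whenever $\sigma$ has at most that many codes, we play the $(1,\lfloor (r-1)/a\rfloor,a,n-N)$-game on $[\sigma]^{\leq n}$ against the code trees of its current codes. Hence at least $\frac{q}{a}2^N$ codes at level $N$ are nonempty.

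\textbf{Step 2.} Claim \ref{optclaim10} applies to the set of nonempty codes, which has measure at least $q/a$. It yields $\C_1\subseteq q\times 2^m$ with $|\C_1|\geq \frac{q}{a}2^m$ and a small set $A_2$, such that every valid code in $[\C_1]^{\leq m+\h n}$ is nonempty.

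\textbf{Step 3.} We lower the capacity. For $\sigma$ at level $m+3\t n$, consider the valid codes in $[\C_1]^{\leq m+\h n}$ outside $[f^{-1}(\sigma)]^\preceq$. Each of them already codes at least one source outside $[\sigma]^\preceq$, so it has residual capacity at most $a-1$ for $[\sigma]^\preceq$. We therefore apply the \emph{outer} hypothesis, i.e.\ the $(1,q2^{m+3\t n},a-1,\h n-3\t n)$-game, to each such $\sigma$. This secures the analogue of (\ref{opteqfirstphasestep3target}) at cost $o(1)$, since $\h n-3\t n$ is large relative to $m+3\t n$.

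\textbf{Step 4.} For each $\sigma\in p\times 2^{m+\t n}$ we choose a representative $\tau_\sigma\in[\sigma]^{=m+3\t n}$ with at most $aq2^{m+\t n}$ codes in $[\C_1]^\preceq$, and enumerate all those codes. The cost is at most $apq2^{2m-\t n}=o(1)$.

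The main obstacle is Step 3. The reduction to capacity $a-1$ is only sound if it is maintained dynamically. The outer game is a genuine multi-round game, not a static one as in capacity one, and Baby may later add sources from $[\sigma]^\preceq$ to codes that were nonempty only because of outside sources. So we must check that the simulated $(a-1)$-capacity coding function is a legitimate Baby move in every round. The simulation sets $\h f(\rho)=f(\rho)\cap[\sigma]^\preceq$ for $\rho$ valid, nonempty, and not extending $f^{-1}(\sigma)$ at the time of freezing. Its capacity is at most $a-1$ because some element of $f(\rho)$ lies outside $[\sigma]^\preceq$ and persists under extension. It is monotone in $t$, and it satisfies the prefix condition because codes and sources are nested. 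A further point is that the ``outside'' source must be one fixed at the moment Step 3 begins, which holds because $f$ only grows. Finally, the parameters are chosen with the same hierarchy as before: $\t n\gg m$, $\h n-3\t n\gg m+3\t n$, $N\gg \h n$, and $n-N\gg N$. Under these choices the four phase-one costs and the phase-two cost sum to at most $c$.
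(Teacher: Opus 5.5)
Your proposal has a genuine gap in the step where Step 3 ``secures'' the analogue of (\ref{opteqfirstphasestep3target}), so that after Phase one no source at level $m+\tilde n$ can \emph{in the future} be solely validly coded in $[C_1]^\preceq$, and Phase two then runs on $C_0$. In the capacity-two proof this works only because Lemma \ref{optlemcapacityone} is static: one enumeration rules out every future coding.

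The outer hypothesis for capacity $a-1\geq 2$ gives only an adaptive strategy. Such a strategy advances only at rounds where Baby's move is legal in the simulated game, i.e.\ where all of $[\tau_\sigma]^{=m+\hat n}$ is validly coded inside $C_1$, and Baby decides whether that happens. He can code part of $[\tau_\sigma]^\preceq$ over $C_0$. The resulting ``violation'' of the simulated game is not a real loss for him; it merely suspends that game. Nothing then stops him, after your Phase two has begun, from adding fresh codes in $C_1$ and again coding $[\tau_\sigma]^\preceq$ solely there. At such a round his move is not legal in the $C_0$-game, so your Phase-two simulation breaks. There is no moment at which Phase one is finished. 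Your checks in the ``main obstacle'' paragraph (capacity $\le a-1$, monotonicity, prefix condition of the simulated map) are fine, but they do not address this.

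The missing idea is the paper's alternation between $C_0$ and $C_1$. Replace the permanent property by the working condition (\ref{optphaseonetarget2}), which only has to hold at some rounds. Enumerate the frozen $f^{-1}(\tau_\sigma)$ once. Then run concurrently the $C_0$-game and one $(1,q2^{m+3\tilde n},a-1,\hat n-3\tilde n)$-game for each $\tau_\sigma$. At each round, pass Baby's coding function to the $C_0$-game if (\ref{optphaseonetarget2}) holds. Otherwise pass it to the $\tau_\sigma$-game of a witnessing $\sigma$, where it is then a legal move. The moves passed to one subgame form a legal play of that subgame, since $f$ and $A$ are monotone. If Baby moved legally forever, some one of the finitely many subgames would receive an infinite legal play against a winning strategy, which is impossible. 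So Baby eventually has no legal move, and the total cost is the sum of the subgame costs.

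A smaller slip: $|C_1|\geq\frac qa 2^m$ does not give $|C_0|\leq\frac{r-1}{a}p2^m$ when $r<a$ (e.g.\ $a=3$, $p=3$, $q=2$). What you actually need is $|C_1|\geq\frac pa 2^m$. This already follows from every level-$N$ source having at least one code, so in your additive induction Step 1 is not even needed.
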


 \noindent\textbf{Induction framework and new difficulty.}
 The proof proceeds by induction on $q/p$.
 For $q/p\leq 1/a$,
 the proof is obvious.
 There  are $\geq aq2^n$ many sources (in level $n$)
 and $q2^n$ many codes while each code can code at most $a$ sources.
 Thus, enumerating any code wins the game.
 Suppose   Lemma \ref{optlemcapacityamulticodetree}  holds for $q/p\leq r$
 and now $q/p\leq (1+1/a)r$ (where $1/a\leq r\in \mathbb R$).
 
 The new difficulty is that \alice\ do not have a static winning strategy for capacity 
 $a-1$ game.
 Therefore, she cannot, by enumerating a small set,
 entail that all certain source will always be validly coded over $\C_0$
in the future.
To this end, \alice\ shift between $\C_0$ and $\C_1$.
She works over $\C_1$ to drive all 
certain sources
(one representative $\tau_\sigma$ for each $\sigma\in p\times 2^{m+\t n}$) into $[\C_0]^\preceq$
(i.e., achieving an analog of (\ref{optphaseonetarget}) temporarily), say at round $t$;
then \alice\ applies the winning strategy for the smaller coding space $\C_0$.
But if later the working achievement 
(namely the analog of (\ref{optphaseonetarget})) fails,
\alice\ goes back to working over $\C_1$ until the working achievement is true again.

 \def\C{C}
 \ \\
 
\noindent \textbf{Wining strategy sketch.}
\begin{enumerate}
\item 
First phase and $\C_1$ part:
\alice\ will apply a strategy (with considerable effort and to be specified)
to achieve the following.
For some    $m\in\omega$ (to be specified)  
and sufficiently large  (depending on $m$ and to be specified) $\t n\in\omega$,
the following holds.
There is a partition of the coding space $q\times 2^m$ into two parts,
 namely $\C_0$ and $ \C_1$
 such that $|\C_1|\geq \frac{rp}{a}\cdot 2^m$.
Furthermore, \alice\ will maintain (by working over $\C_1$ and to be specified)
that (whenever \baby\ responded without violating the game rule over $\C_1$) there is always  a future round $t$
such that
\begin{align}\label{optphaseonetarget2}
&\text{No source $\sigma\in p\times 2^{m+\t n}$ 
is  solely }\\ \nonumber
&\text{validly coded in $[\C_1]^\preceq$ at   round $t$.
}
\end{align}
 The key is to show that 
 the cost for maintaining (\ref{optphaseonetarget2}) can be arbitrary small.

\item Second phase and $\C_0$ part:
Note that 
  $$|\C_0|\leq (q-\frac{rp}{a})\cdot 2^m\leq \bigg((1+\frac{1}{a})rp -\frac{rp}{a}\bigg)\cdot 2^m
 \leq rp\cdot 2^m.$$
At each round so that 
the working achievement (\ref{optphaseonetarget2}) holds,
the coding function \baby\ presents is seen as an
allowed action for $(p2^m, [rp2^m], a, \t n)$-game \ref{optgame1}.
Where the coding space consists of:  
the coding forest $$(  [\rho]^{\leq m+\t n }: \rho\in \C_0);$$
and the source space  consists of: 
the source forest $$( [\sigma]^{\leq m+\t n}: \sigma\in p\times 2^m).$$
Therefore, \alice\
apply the winning strategy 
for $(p2^m, [rp2^m], a, \t n)$-game \ref{optgame1}
at level $m+\t n$.
The induced game
complies to the definition of Game \ref{optgame1}.
By induction hypothesis,
\alice\ wins the game over $\C_0$ (with an arbitrary small cost
since $\t n$ is sufficiently large depending on $m$).

\end{enumerate}

Clearly, it remains to address the strategy of the first phase
which is the key of the proof.

 \ \\
 
\noindent\textbf{First phase strategy and working over $\C_1$.}

\begin{enumerate}
\item First phase step 1:
force more than $  r $ many codes for each source in level $N<n$. 
For each source $\sigma\in p\times 2^N$,  before $\sigma$ admits
more than $ r$ many codes,
apply the winning strategy (promised by inductive hypothesis)
with respect to source tree $[\sigma]^{=n}$
and (at most) $r$ many code trees (whose root are
codes of $\sigma$) to force $\sigma$ to admit
more than $ r$ many codes.

\item First phase step 2:
cut off empty coding space.
Since now every source $\sigma\in p\times 2^N$ admits
more than $r $ many codes, we have:
more than $\frac{rp}{a}2^N$ many codes
(in level $N$) are non empty.
Therefore,
\alice\ enumerates a small set $A_2 $
(promised by Claim \ref{optclaim10})
that will achieve the following.
For some set $\C_1\subseteq q\times 2^m$
(for some $m<N$) with  $|\C_1| \geq \frac{rp}{a}\cdot 2^m,$ 
some  $\h n$ sufficiently large depending on $m$
(with $m+\h n\leq N$),
the following holds:
\begin{align}\nonumber
\text{Every valid code $\rho\in   [\C_1]^{=m+\h n}  $
is non empty.}
\end{align}

\noindent The key note here is the following capacity minus one observation:
Let $f$ denote the current coding function of \baby.
For each $\sigma\in p\times 2^{[m,m+\h n]}$,
\begin{align}\label{opteqkey}
&\text{The coding space }    [\C_1]^{\leq m+\h n}\setminus [f^{-1}(\sigma) ]^{\leq m+\h n}\\ \nonumber
&\text{only have valid capacity $a-1$ for 
source space $  [\sigma]^{\leq m+\h n}$}
\end{align}
since
every valid code in this coding space is occupied
by (at least one) source outside $[\sigma]^\preceq$.

\item First phase step 3:
For each source $\sigma\in p\times 2^{m+\t n}$,
\alice\ choose a representative $ \tau_\sigma\in [\sigma]^{=m+ 3\t n}$ so that it does not admit too many codes in $[\C_1]^\preceq$
(namely $|f^{-1}(\tau_\sigma)|\leq aq\cdot 2^{m+\t n}$)
and enumerate $f^{-1}(\tau_\sigma)$ (for all $\sigma\in p\times 2^{m+\t n}$).
Combining with (\ref{opteqkey}),
we now have for each $\sigma\in p\times 2^{m+\t n}$
\footnote{Again, a non valid code in the coding space could have more than $a-1$
capacity for the source space over $\tau_\sigma$.
However, since every source must admit a valid code, we can simply ignore these non valid codes.}
\begin{align}\label{opteqkey2}
&\text{The coding space }   [\C_1]^{\leq m+\h n}    \text{  only have }\\ \nonumber
&\text{valid capacity $a-1$ for 
source space $  [\tau_\sigma]^{\leq m+\h n}$}
\end{align}

\item First phase step 4: maintaining
(\ref{optphaseonetarget2}).
 Note that
 at the round (\ref{optphaseonetarget2}) fails,
 say witnessed by $\sigma\in p\times2^{m+\t n}$
 (in particular, the source
 space $ [\tau_\sigma]^{\leq m+\h n}$
 is solely validly coded over $\C_1$), the coding function
 presented by \baby\ can be seen as an allowed move for 
  $(1,q2^{m+3\t n}, a-1, \h n-3\t n)$-game \ref{optgame1}
where the source space is $ [\tau_\sigma]^{\leq m+\h n}$
and the coding space is  $[\C_1]^{[m+3\t n, m+\h n]}  $.
 Therefore, \alice\ resumes the winning strategy 
 for the game to re-achieve (\ref{optphaseonetarget2}).
Note that \alice\ is thereby playing $p2^{m+\t n}$ many games
(one for each $\tau_\sigma$)
simultaneously.

\end{enumerate}
We look at the cost:
\begin{itemize}
\item The cost in step 4 is bounded by:
\begin{align}
&p2^{m+\t n}\cdot\text{cost of each game given by some $(\tau_\sigma:\sigma\in p\times2^{m+\t n})$}\\ \nonumber
=& p2^{m+\t n}\cdot o_{m,\t n,\h n-3\t n\rightarrow\infty}(1)
 \\ \nonumber
 =& o_{m,\t n,\h n-3\t n\rightarrow\infty}(1).
\end{align}
\item
The cost in step 3 is bounded by:
\begin{align}\nonumber
\sum_{\sigma\in p\times 2^{m+\t n} }  \m(\tau_\sigma)\cdot |f^{-1}(\tau_\sigma)|&\leq p2^{m+\t n}\cdot \m(\tau_\sigma)\cdot aq\cdot 2^{m+\t n}  \\ \nonumber
&\leq apq\cdot 2^{2m-\t n}
=o_{m,\t n\rightarrow\infty}(1).
\end{align}
 
\item The cost for step 2 is, by Claim \ref{optclaim10},
$o_{\h n, N\rightarrow\infty}(1)$
(which also depends on how large \alice\ want  $\h n$ to be).
\item The cost for step 1 is, by inductive hypothesis,
$o_{N,n\rightarrow\infty}(1)$ (which also depends on how large 
\alice\ want $N$ to be).
\end{itemize}
The choice of parameters are, again, as following:
$n$ is sufficiently large in the sense that: there is an $N\leq n$ 
such that $n-N$ is sufficiently large depending on $N$ 
and $N$ is sufficiently large in the sense that:
there are $m<\t n<\h n $ with $m+\h n\leq N$ such that 
$\h n-3\t n$ is sufficiently large depending on $m+3\t n$
and $\t n$ is sufficiently large depending on $m$.
Thus, the overall cost for \alice\ in the first phase and over $\C_1$ is 
$$o_{m,\t n\rightarrow\infty}(1)+
o_{m,\t n,\h n-3\t n\rightarrow\infty}(1)
+o_{\h n, N\rightarrow\infty}(1)+ o_{N,n\rightarrow\infty}(1)=o_{n\rightarrow\infty}(1).$$
 
We have closed the induction, thus
concludes the
proof of Theorem \ref{optpropwin}.

\section*{Acknowledgement}
\noindent\textbf{Funding.} Lu Liu is supported by  National Natural Science Foundation of China grant 12471002.

\noindent\textbf{Use of AI.} This research is aided by AI. 
Authors transform the original question (Corollary \ref{optth0})  to 
a finite game question (Theorem \ref{optpropwin}) and then proved a special case 
for Theorem \ref{optpropwin}
 namely capacity $a=2$ and $p\leq q$.
Author then feed this proof to AI who then provided a 
somewhat different  proof  of Theorem \ref{optpropwin}
for the general case, which is   presented in this paper. 
Author read and checked all results and proofs provided by AI and take full responsibility.
 We also used AI for checking proofs,  literature search
 and polishing.

\end{document}